\documentclass[11pt,reqno]{amsart}
\usepackage{amsthm}
\usepackage{siunitx} 
\frenchspacing
\sloppy

\newcommand {\CC}  {{\mathbb C}}

\newcommand {\NN}  {{\mathbb N}}

\newtheorem {Th}{Theorem}

\newtheorem {Pro}{Proposition}
\newtheorem {Le}{Lemma}

\begin{document}
\title{Explicit bounds for composite lacunary polynomials}
\author{Christina Karolus}
\address[Christina Karolus]{University of Salzburg, Hellbrunnerstr. 34/I, 5020 Salzburg, Austria}
\email{christina.karolus@sbg.ac.at}
\keywords{decomposable polynomials, lacunary polynomials}
\subjclass[2010]{11C08, 11R09, 12E05}

\begin{abstract}
Let $f, g, h\in \CC\left[x\right]$ be non-constant complex polynomials satisfying $f(x)=g(h(x))$ and let $f$ be lacunary in the sense that it has at most $l$ non-constant terms. Zannier proved in \cite{za2} that there exists a function $B_1(l)$ on $\NN$, depending only on $l$ and with the property that $h(x)$ can be written as the ratio of two polynomials having each at most $B_1(l)$ terms. Here, we give explicit estimates for this function or, more precicely, we prove that one may take for instance 
\begin{center}$B_1(l)=(4l)^{(2l)^{(3l)^{l+1}}}$.\end{center}
Moreover, in the case $l=2$, a better result is obtained using the same strategy.   
\end{abstract}
\maketitle

\section{Introduction}
\label{intro}

Let $f,g,h\in\CC\left[x\right]$ and $f=g\circ h$ be a lacunary polynomial with $l$ non-constant terms, i.e. $f$ is of the form 
$f(x)=a_0+a_1x^{n_1}+\cdots +a_lx^{n_l}$. Note that only the number of terms is viewed as fixed, while the coefficients and the degrees may vary. 
In \cite{za2}, it was shown by Zannier that there exists a function $B_1(l)$ such that $h(x)$ can be written as the ratio of two polynomials in $\CC\left[x\right]$ both having no more than $B_1(l)$ terms. In order to give explicit estimates for $B_1(l)$, we are following the stategy of \cite[Prop. 2]{za2}. Therefore, we will recall those parts of Zannier's proof, which are relevant to our arguments, in the very beginning of the paper. We prove

\begin{Th}\label{main}
Let $f, g, h\in\CC\left[x\right]$ be non-constant complex polynomials such that $f(x)=g(h(x))$ has at most $l$ non-constant terms. Then $h(x)$ can be written as the ratio of two polynomials in $\CC\left[x\right]$ having each at most $B_1(l)$ terms, where  
\begin{center}
$B_1(l)=(4l)^{(2l)^{(3l)^{l+1}}}$ for $l\ge 1$.
\end{center}
\end{Th}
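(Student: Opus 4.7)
The plan is to revisit Zannier's proof of \cite[Prop. 2]{za2} and make every implicit bound explicit, tracking at each step how the number of terms in the constituent polynomials grows as a function of $l$.

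First I would normalize the data. Writing $g(y)=b_0+\sum_{i=1}^{k} b_i y^{d_i}$ with $1\le k\le l$ and $0<d_1<\cdots<d_k$, and replacing $h$ by $h-h(0)$ if necessary, I may assume $h(0)=0$ and $h$ monic. Then
\[
f(x)-b_0=\sum_{i=1}^{k} b_i\, h(x)^{d_i}
\]
is a sum of $k$ formal power series with pairwise distinct orders of vanishing at $0$, yet has at most $l$ non-constant terms in total. This is the source of all cancellations to follow.

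The core of Zannier's argument extracts, from these cancellations, a polynomial identity of bounded complexity satisfied by $h$. I would proceed by an induction that, at each round, either reduces the problem to a smaller instance or exhibits $h$ as lying in a class of ratios of lacunary polynomials with a controlled number of terms. Concretely, one expands each $h^{d_i}$ in $\CC[[x]]$ (or in a ring of Laurent polynomials once $x^{-1}$ is inverted), groups ``matching'' monomials across the $k$ expansions using pigeonhole on the at most $l$ allowed exponents of $f$, and iterates. The three nested exponentials in $B_1(l)$ should correspond to three successive layers: an innermost iteration of length $l+1$ that multiplies the term count by at most $3l$ per step (producing the tower $(3l)^{l+1}$); a middle factor $(2l)$ absorbing the cost of extracting $h$ itself from a suitable lacunary power via an explicit formula in the Laurent ring; and an outer factor $(4l)$ accommodating the final clearing of denominators and conversion back into an ordinary ratio representation.

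The main obstacle is the innermost bookkeeping: verifying that the structural step in Zannier's proof, which is only stated qualitatively, admits a clean multiplicative estimate with factor at most $3l$ per iteration. In particular one must check that the intermediate auxiliary polynomials (such as greatest common divisors, or partial sums used to isolate subfamilies of indices $\{i:d_i\in S\}$ for various $S$) are controlled purely in terms of their number of terms, with no hidden dependence on the $d_i$. Once that is in place, the middle and outer layers reduce to standard inequalities of the form $\#\mathrm{supp}(PQ)\le \#\mathrm{supp}(P)\cdot\#\mathrm{supp}(Q)$ and $\#\mathrm{supp}(P+Q)\le\#\mathrm{supp}(P)+\#\mathrm{supp}(Q)$, together with an explicit term-count bound for the formal power-series inversion of a lacunary polynomial truncated to the order needed to recover $h$. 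Combining these estimates with Zannier's structural skeleton should yield the stated $B_1(l)=(4l)^{(2l)^{(3l)^{l+1}}}$.
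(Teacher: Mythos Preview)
Your plan has a genuine gap at the very outset: the claim that $g$ has at most $l$ non-constant terms (your ``$1\le k\le l$'') is false. For instance, $g(y)=(y-1)^n$ and $h(x)=x^m+1$ give $f(x)=x^{mn}$ with a single non-constant term, while $g$ has $n$ of them. What \emph{is} bounded is $\deg g\le 2l(l-1)$, by \cite[Thm.~1]{za1}, and this bound is used repeatedly in the actual argument; you never invoke it.

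More seriously, the sketch does not track Zannier's proof closely enough to make the constants come out as announced. Zannier works at $x=\infty$: with $y=1/x$ he expands $\tilde h(y)=y^{n_l/d}h(1/y)$ as a truncated combination $t_1+\cdots+t_L+O(y^{2n_l})$ of terms of the explicit shape $c\,\delta_p(y)^{s/d-k}y^{\ast}$, where $\delta_p(y)=1+b_1y^{n_1}+\cdots+b_py^{n_p}$. The proof then bifurcates on whether $\{t_1,\ldots,t_L,\tilde h\}$ is linearly independent over $\CC$. In the independent case one gets the recursive inequality $n_l/n_p\le 2\cdot 16^{l+1}d^3(n_l/n_{p+1})^{2l}$; iterating this is what produces the $(3l)^{l-1}$ in Lemma~\ref{lem} (it is \emph{not} a ``multiply by $3l$ per step'' phenomenon but a ``raise to the $2l$-th power per step'' one). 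In the dependent case one writes $\tilde h$ as a sum of $L$ terms involving $\eta_p^{(s-kd)/e}$, applies the induction hypothesis $B_1(l-1)$ to $\eta_p$, bounds the exponent by an explicit $M$, and solves the recursion $x_l=L\,x_{l-1}^M$ to get $(2L)^{M^{l-1}}$; the final simplifications to $(4l)^{(2l)^{(3l)^{l+1}}}$ are then pure inequality-chasing. None of this structure---the expansion at infinity, the $\delta_p$-terms, the independence/dependence dichotomy, the recursion in $p$, or the induction on $l$ via $\eta_p$---appears in your outline, and your proposed interpretation of the three exponential layers does not match how they actually arise.
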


There are quite different notions of {\it lacunarity} (lacunary polynomials are also sometimes called {\it sparse}).
Here, we deal with the situation that the number of terms of a given polynomial is fixed. It was conjectured by Erd\H{o}s that if $g$ is a complex non-constant polynomial with the property that $g(x)^2$ has at most $l$ terms, then $g(x)$ has also boundedly many terms and their number depends only on $l$. In \cite{sch}, Schinzel proved a generalized version of Erd\H{o}s's conjecture, namely the statement not only for $g(x)^2$ but for $g(x)^d$, $d\in\NN$. He also extended the conjecture to compositions $f(x)=g(h(x))$, claiming that if $f$ has $l$ terms, then $h(x)$ has at most $B(l)$ terms for some function $B$ on $\NN$. Zannier gave a proof for this (actually in a stronger version), wherein he showed in a first step the existence of a function $B_1$ such that, under the given assumptions, $h(x)$ can be written as a rational function with at most $B_1(l)$ terms in both the numerator and the denominator \cite{za2}. Using this result, he proved the stated claim for representations as polynomials\footnote{Examples as $h(x)=(x^n-1)/(x-1)=x^{n-1}+x^{n-2}+\cdots+x+1$ show that, written as a polynomial, $h(x)$ can have substantially more terms than in a representation as a rational function.} and moreover, he gave a complete description of general decompositions $f(x)=g(h(x))$ of a given polynomial $f(x)$ with $l$ terms. Also, for an outer composition factor $g$ in $f(x)=g(h(x))$, Zannier gave suitable bounds for the degree of $g$ depending only on the number of terms of $f$ \cite{za1}. Note that in both, the polynomial case and the case of a rational function, the special shapes $h(x)=ax^m+b$ and $h(x)=ax^m+bx^{-m}+c$, respectively,  must be taken into account. This follows easily from the following observation. Let $h(x)=ax^m+b$ and $g(x)=g_1(x-b)$. Then $f(x)=g(h(x))=g_1(ax^m)$ has at most $l$ non-constant terms, whereas the degree of $g$ can be arbitrary high. Similarly, in the case of rational functions, one can take for instance $h(x)=x+x^{-1}$ and $g(x)=T_n(x)$, the $n$-th Chebyshew-polynomial, to get a contradiction with the given statements. However, the results were later extended first to Laurent-polynomials \cite{za3} and then to rational functions \cite{fz}. 
Recently, in \cite{fmz} Fuchs, Mantova and Zannier achieved a final result for completely general algebraic equations $f(x,g(x))=0$. Here, $f(x,y)$ is assumed to be monic and of given degree in $y$ and with boundedly many terms in $x$. As pointed out in \cite{fp}, there are also other forms of lacunarity. Here, Fuchs and Peth\H{o} considered rational functions having only a given number of zeros and poles and they again studied their decomposability. This can be seen as a multiplicative analogue to the above mentioned problem. Based on their results, by computational experiments Peth\H{o} and Tengeley studied the decomposability of rational functions having at most four zeros and poles and they provided parametrizations of all possible solutions and the appropriate varieties in this case \cite{pt}.

The present paper is organized as follows. In the very beginning, we give the main results and parts of Zannier's proof, which are crucial for our deductions. Based on this, in Section 3 we prove Theorem \ref{main}, giving an explicit bound for the function $B_1(l)$ in Zannier's Proposition. This bound happens to be triple-exponential. The reason for this is the following. In the proof, an estimate for the ratio $n_l/n_1$ is used to give an upper bound for an exponent. Such an estimate can be found through an recursive procedure, pointed out by Zannier. The bound we obtain through this method will be double-exponential, which in the end leads to the received order. We do not know whether that can be improved in general or how far away we are from a ``good bound''. However, our bound surely is far from the truth.  
At least for the cases $l=1$ and $l=2$ we have the smaller estimates $2$ and $1\ 114\ 112$, respectively. For the latter bound, the corresponding statement and its proof is given in the last part of Section 3.

Finally, we mention that, independently of us, Dona in \cite{do} also gave an explicit bound for $B_1(l)$. His deductions are based on Zannier's proof too. Therefore it is not surprising that qualitatively his bound is of the same shape (i.e. also triple-exponential) although it is quantitatively better than ours. Nevertheless, and also for the fact that our result is already mentioned in \cite{fh}, it appears to us that the result is still worth to be found in the literature.

\section{Arguments from Zannier's proof}
\label{sec:1}

In this section we recall the main steps of Zannier's proof \cite[Prop. 2]{za2}. For clarity, we keep the original notation.  
Assuming that $f(x)=g(h(x))$ has at most $l$ non-constant terms, let $\deg f=m=n_l$ and $\deg g=d$, so that we have $\deg h=n_l/d$. From \cite[Thm. 1]{za1} it follows that $d\le 2l(l-1)$. Set $y=1/x$ and $\tilde{h}(y)=x^{-n_l/d}h(x)=y^{n_l/d}h(1/y)$, such that $\tilde{h}\in\CC\left[y\right]$. Moreover, write $f(x)=ax^m(1+b_1y^{n_1}+\cdots +b_ly^{n_l})$, where $0=:n_0<n_1<\ldots<n_l$. We may assume that $f$ has exactly $l$ non-constant terms, i.e. $ab_1\cdots b_l\neq0$. Instead of $h(x)$, Zannier proves the statement for $\tilde{h}(y)$, which in fact has the same number of terms as $h(x)$. Also, its degree is bounded by $\deg \tilde{h}\le n_l/d=\deg h$.

Writing $\delta_p(x)=1+b_1x^{n_1}+\cdots+b_px^{n_p}$ for an integer $p$ with $1\le p \le l-1$,
Zannier deduces that, in $\CC\left[\left[y\right]\right]$, $\tilde{h}$ is of the form $\tilde{h}(y)=t_1+t_2+\cdots +t_L+O(y^{2n_l})$, where $L$ is an integer which may be bounded by a function in $l$ and $n_l/n_{p+1}$ and the $t_1,\ldots,t_L$ are all of the shape
\begin{align}
c\delta_p(y)^{s/d-k} y^{h_1n_{p+1}+\cdots +h_{l-p}n_p+(1-s)m/d},\label{eqn:delta}
\end{align}
for varying $h_1,\ldots,h_{l-p}\in\NN$, $k=h_1+\ldots+h_{l-p}$, suitable constants $c=c(h_1,\ldots,h_{l-p},
s)$ and $s\in \{1,0,-1,\ldots,1-2d\}$, where the exponent of $y$ in such terms is smaller than $2n_l$. An easy argument shows that for $L$ one may take the rough estimate  $L\le(2d+1)(2n_l/n_{p+1}+1)^l$. 

In the case that $t_1,\ldots,t_L,\tilde{h}(y)$ are linearly independent over $\CC$, the author then proves that
\begin{align}
n_l\le 16^{l+1}d^3(n_l/n_{p+1})^{2l}(1+n_p),\label{quotient2}
\end{align}
and furthermore, using $1+n_p\le 2 n_p$, it follows that
\begin{align}
(n_l/n_p)\le2\cdot16^{l+1}d^3(n_l/n_{p+1})^{2l}.\label{quotient1}
\end{align}

If on the other hand $t_1,\ldots,t_L,\tilde{h}(y)$ are linearly dependent over $\CC$, it inductively follows that $\tilde{h}(y)$ can be written as the ratio of two polynomials in $\CC\left[y\right]$ having each at most $B_2(l,n_l/n_{p+1})$ terms, where $B_2(l,u)$ is a suitable function which may be estimated in terms of $B_1(l-1)$ and of $u\ge0$. 

Zannier then distinguishes between those two cases for each $p=l-1,l-2,\ldots$. Suppose that for $p=l-1,l-2,\ldots,l-r$ always the first situation occurs. In that case we can recursively determine upper bounds for the quotients $n_l/n_p$, since for $p=l-1$ the initial condition $n_l/n_{p+1}=1$ holds. 

The proof then argues via backwards induction on $p=l-1,l-2,\ldots$. If $t_1,t_2,\ldots,t_L,\tilde{h}(y)$ are linearly independent {\it for all} $p=l-1,l-2,\ldots,1$, one can use (\ref{quotient1}) and (\ref{quotient2}) to get an estimate for $n_l$, which in fact also gives an estimate for the number of terms of $\tilde{h}(y)$ (and hence of $h(x)$) even written as a polynomial, as the number of terms of $\tilde{h}$ is bounded by its degree $n_l/d\le n_l$. 

On the other hand, if the $t_1,\ldots,t_L,\tilde{h}(y)$ are linearly dependent over $\CC$ for at least one $p\in\{1,\ldots,l-1\}$, let $p_0$ denote the last $p$ for which this case occurs, i.e. we may assume that we have linear independency for $p=l-1,l-2,\ldots,p_0+1$ and linear dependency for $p=p_0$. Here, Zannier concludes that $\tilde{h}(y)$ is the sum of at most $L=L(p_0)$ terms of the form (\ref{eqn:delta}), where $k\le 2l\cdot n_l/n_{p_0+1}$ and $s\in\{1,0,-1,\ldots,1-2d\}$.
Now the author uses the linear independency in the cases $p>p_0$ to estimate the quotient $n_l/n_{p_0+1}$, which occurs in the estimates for $L$ and $k$ to get the disired result.

\section{Proof of Theorem \ref{main} and the case $l=2$}
\label{sec:3}

In order to prove Proposition 1, we start with the following lemma.

\begin{Le}\label{lem}
Let $f\in\CC\left[x\right]$ be of the form $f(x)=a_0+a_1x^{n_1}+\cdots+a_lx^{n_l}$, $0<n_1<\ldots<n_l$, and let for every integer $p$, $1\le p\le l-1$, $S=S(p)=\{t_1,\ldots,t_L,\tilde{h}(y)\}$ be the set described in Section 2. If for each $p=l-1,\ldots,l-r$ the set $S$ is linearly independent over $\CC$, it holds that
\begin{center}
$\displaystyle\frac{n_l}{n_{l-r}}\le(16^{l+2}l^6)^{(3l)^{r-1}}$.
\end{center}
\end{Le}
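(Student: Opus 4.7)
The plan is to iterate the linear-independence bound \eqref{quotient1} from the recalled proof, absorb the degree bound $d\le 2l(l-1)\le 2l^2$ into a single constant $C=C(l)$, and then carry out a backwards induction on $r$, starting from the base case $p=l-1$ where $n_l/n_{p+1}=1$ provides the initial condition.

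Concretely, set $u_r := n_l/n_{l-r}$ for $r=1,\dots,l-1$ and let $C := 16^{l+2}l^6$. For $p=l-1$, inequality \eqref{quotient1} gives
\[
u_1 = \frac{n_l}{n_{l-1}} \le 2\cdot 16^{l+1} d^3 \le 2\cdot 16^{l+1}(2l^2)^3 = 16^{l+2}l^6 = C,
\]
which matches the claimed bound for $r=1$ since $(3l)^{0}=1$. For general $p=l-r-1$ with $1\le r\le l-2$, linear independence at $p$ combined with \eqref{quotient1} gives
\[
u_{r+1} = \frac{n_l}{n_{l-r-1}} \le 2\cdot 16^{l+1} d^3\, u_r^{2l} \le C\, u_r^{2l},
\]
so the whole proof reduces to analysing this single recursion.

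I would then establish $u_r \le C^{(3l)^{r-1}}$ by induction on $r$. Assuming the bound at stage $r$, the recursion yields
\[
u_{r+1} \le C\cdot C^{\,2l\,(3l)^{r-1}} = C^{\,1+2l(3l)^{r-1}},
\]
so it suffices to check $1+2l(3l)^{r-1}\le (3l)^r$; this rearranges to $1\le l(3l)^{r-1}$, which holds trivially for every $l\ge 1$ and $r\ge 1$. This completes the induction and gives exactly the bound stated in the lemma.

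The argument is essentially mechanical once the recursion is set up; the only real choice is how to package the constants to keep the exponent tower as clean as $(3l)^{r-1}$. The mild ``obstacle'' is simply the bookkeeping of the constants: one must be sure that the slack $1\le l(3l)^{r-1}$ at each induction step is large enough to absorb the extra factor of $C$ contributed at each iteration, which is what forces the choice of the base $3l$ rather than $2l$ in the exponent tower. Everything else reduces to the two numerical inequalities $d\le 2l^2$ and $d^3\le 8l^6$ already available from \cite[Thm.~1]{za1}.
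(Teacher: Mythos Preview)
Your proof is correct and follows essentially the same route as the paper's: both iterate the bound \eqref{quotient1}, absorb $2\cdot 16^{l+1}d^3$ into the single constant $16^{l+2}l^6$ via $d\le 2l(l-1)\le 2l^2$, and then verify that the resulting exponents $1+2l(1+2l(1+\cdots))$ are dominated by $(3l)^{r-1}$. The only cosmetic difference is that the paper presents the iteration as an explicit list of successive bounds and estimates the nested exponent at the end, whereas you package the same computation as a formal induction on $r$ with the explicit check $1+2l(3l)^{r-1}\le (3l)^r$.
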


\begin{proof}
We set $\lambda=2\cdot16^{l+1}d^3$. By \cite[Thm. 1]{za1}, we have $d\le2l(l-1)$. Applying (\ref{quotient1}), we obtain iteratively
\begin{center}
\renewcommand{\arraystretch}{2}
\begin{tabular}{l}
$\displaystyle\frac{n_l}{n_{l-1}}\le2\cdot16^{l+1}\cdot d^3=:\lambda$,\\
$\displaystyle\frac{n_l}{n_{l-2}}\le2\cdot16^{l+1}\cdot d^3(2\cdot16^{l+1}\cdot d^3)^{2l}=\lambda^{2l+1}$,\\
$\displaystyle\frac{n_l}{n_{l-3}}\le \lambda(\lambda^{2l+1})^{2l}=\lambda^{1+2l(1+2l)}$,\\
$\phantom{ba}\vdots$\\
$\displaystyle\frac{n_l}{n_{l-r}}\le\lambda^{1+2l(1+2l(1+2l(\ldots)))}\le\lambda^{(3l)^{r-1}}$.\\
\end{tabular}
\end{center}

As $\lambda=2\cdot16^{l+1} d^3\le2\cdot16^{l+1}(2l(l-1))^3\le16^{l+2}l^6$, in the case that for $p=l-1,l-2,\ldots,l-r$ the set $S$ is always linearly independent over $\CC$, we obtain
the claimed result.\end{proof}

\begin{proof}[Proof of Proposition 1]

Following the proof of \cite[Prop. 2]{za2}, we argue by induction on $l$. As in the previous section, we keep the notation from Zannier's proof. As pointed out by Zannier, for $l=1$ we may take $B_1(1)\ge 2$. Now, for the rest of the proof let us assume that the statement holds for $l-1$, i.e. that $B_1$ has been suitably defined on $\{1,2,\ldots,l-1\}$.

For $p=l-1,l-2,\ldots,0$ and $\delta_p(y)=1+b_1y^{n_1}+\cdots+b_py^{n_p}$, in $\CC\left[\left[y\right]\right]$ we can write 
\begin{align*}
\tilde{h}(y)=t_1+t_2+\cdots+t_L+O(y^{2n_l}),
\end{align*}
where the $t_i$, $1\le i\le L$, are terms of the shape (\ref{eqn:delta}), which we may assume to be linearly independent over $\CC$, and $L$ is an integer which, as we know from the proof of \cite[Prop. 2]{za2}, can be bounded by $L\le (2d+1)(2 n_l/n_{p+1}+1)^l$. 

We now consider the set $S=\{t_1,\ldots,t_L,\tilde{h}(y)\}$ and, following the original proof, we distinguish between two possible cases, namely that $S=S(p)$ is linearly independent over $\CC$ for every $p=l-1,l-2,\ldots,1$ or that there is an integer $p_0$, $1\le p_0\le l-1$, such that we have linear dependency, i.e. $S=S(p)$ is linearly independent for $p=l-1,l-2,\ldots,p_0+1$ and $S(p_0)$ is linearly dependent over $\CC$. 

{\it Case 1}. In the case of linear independency for every $p=l-1,l-2,\ldots,1$, by Lemma \ref{lem}, we get the estimate $n_l/n_1\le(16^{l+2}l^6)^{(3l)^{l-2}}$. 
Now, we may apply (\ref{quotient2}) for $p=0$ (recall that $n_0=0$) to get
\begin{align}
n_l&\le 16^{l+1}d^3(16^{l+2}l^6)^{(2l)(3l)^{l-2}}\notag\\
&\le 16^{l+1}(2l^2)^3(16^{l+2}l^6)^{(2l)(3l)^{l-2}}\notag\\
&<(16^{l+2}l^6)^{(2l)(3l)^{l-2}+1}\notag\\
&<(16^{l+2}l^6)^{(3l)^{l-1}}\notag.
\end{align}
This clearly gives an estimate for the number of terms of $\tilde{h}(y)$ written as a polynomial (and hence of $h(x)$), bounding its degree by $n_l/d\le n_l$. 

{\it Case 2}. Let us now consider the second case, where we have linear dependency for some $p=p_0$. Since we assume $S=S(p)$ to be linear independent over $\CC$ for each $p>p_0$, we have, again by Lemma \ref{lem}, that $n_l/n_{p_0+1}$ can be bounded by $(16^{l+2}l^6)^{(3l)^{l-1}}$ as well. Let $e=\left[K:\CC(y)\right]$, where $K=\CC(y,\delta_p(y)^{1/d})$, so $e\in\{1,\ldots,d\}$ is the least integer such that $\delta_p(y)^e$ is a $d$-th power in $\CC(y)$. We write $\delta_p(y)^e=\eta_p(y)^d$ for a polynomial $\eta_p\in\CC\left[y\right]$ to express this fact. Also, for the rest of the proof, we simply write $p$ instead of $p_0$. From Zannier's proof it follows that $\tilde{h}(y)=\Lambda_0$, where $\Lambda_0$ is the sum of at most $L$ terms of the shape 
\begin{align}
c\eta_p(y)^{(s-kd)/e}y^{(1-s)m/d+h_1n_{p+1}+\cdots+h_{l-p}n_l},\label{eqn:terms}
\end{align}
with $k=h_1+\ldots+h_{l-p}\le2l n_l/n_{p+1}$ and where $s\in\{1-2d,\ldots,0,1\}$ is such that $e|s$.  

In order to estimate the number of terms in the requested representation of $\tilde{h}$, we look at 
$\eta_p(y)^{(s-kd)/e}$, which is the important quantity in (\ref{eqn:terms}), when it comes to counting terms. 
Note that $\eta_p(y)^{d/e}=\delta_p(y)$ is a polynomial which has at most $p\le l-1$ non-constant terms, so by the induction hypothesis $\eta_p(y)$ can be expressed as a rational function with at most $B_1(l-1)$ terms in both the numerator and the denominator. That is,  
$$\eta_p(y)=\frac{\eta_{p,1}(y)}{\eta_{p,2}(y)},$$
where $\eta_{p,1},\eta_{p,2}$ are complex polynomials with at most $B_1(l-1)$ terms. 

To find a suitable function for $B_1$, we start with estimating the exponent of
$\eta_p(y)$ in (\ref{eqn:terms}). Recall that $s\in\{1-2d,\ldots,-1,0,1\}$, $k=h_1+\ldots+h_{l-p}<2ln_l/n_{p+1}$ and that $d\le 2l(l-1)$. Therefore we get the following
\begin{align*}
|(s-kd)/e|&\le|s|+|kd|\le 2d-1+2l\frac{n_l}{n_{p+1}}d\\
&\le 2\cdot2l(l-1)\left(1+l\frac{n_l}{n_{p+1}}\right)-1\\
&\le 2^2l^2\left(1+l(2^{4(l+2)}l^6)^{(3l)^{l-1}}\right)-1\\
&<2^2l^2\left(2\cdot2^{(4l+8)(3l)^{l-1}}l^{(3l)^l}\right)-1\\
&=2^{3+(4l+8)(3l)^{l-1}}l^{2+(3l)^l}-1.
\end{align*}

Hence, if we set $M=2^{3+(4l+8)(3l)^{l-1}}l^{2+(3l)^l}$, we see that 
\[(s-kd)/e\in\{-(M-1),\ldots,-1,0,1\}.\]

Now, let us again consider $\tilde{h}(y)=\Lambda_0$.
After reducing all of the $L$ terms of the shape (\ref{eqn:terms}) to the common denominator $\eta_{2,p}(y)\eta_{1,p}(y)^{M-1}$, we can make the rough estimate $B_1(l-1)B_1(l-1)^{M-1}=B_1(l-1)^M$ for the number of terms in the denominator and $LB_1(l-1)^M$ for the number of terms in the numerator.
Since we are looking for a function that bounds both the number of terms in the numerator and the denominator, it now suffices to define $B_1$ in such a way that $B_1(l)\ge x_l$, where 
$(x_l)_{1\le l}\subset\NN$ is the recurrence sequence defined by 
$x_1=2$ and $x_l=L\cdot x_{l-1}^M$ (recall that for $l=1$ it already has been shown in \cite[Prop. 2]{za2} that we may take $B_1(1)\ge 2$).
It follows that 
\[x_l=L^{1+M+M^2+\ldots +M^{l-2}}\cdot 2^{M^{l-1}}<(2L)^{M^{l-1}},\]
hence we may as well define $B_1$ as any function satisfying 
$B_1(l)\ge(2L)^{M^{l-1}}$.
Similarly as for the exponent, we find an upper bound for $2L$,
\begin{align*}
2L&\le 2(2d+1)(1+2n_l/n_{p+1})^l\\
&\le2(2^2l(l-1)+1)(1+2(2^{4(l+2)}l^6)^{(3l)^{l-1}})^l\\
&<2^3l^2\cdot2^{2l+4l(l+2)(3l)^{l-1}}l^{2(3l)^l}\\
&=2^{3+2l+4l(l+2)(3l)^{l-1}}l^{2(3l)^l+2}\\
&=4^{1.5+l+2l(l+2)(3l)^{l-1}}l^{2(3l)^l+2}\\
&<4^{(3l)^{l-1}(2l^2+4l+1)}l^{2((3l)^l+1)}\\
&<4^{(3l)^{l-1}(4/3)^l3l^2}l^{2(4l)^l}\\
&=4^{2^{2l}l^{l+1}}l^{2^{2l+1}l^l}.
\end{align*}
Based on these estimates, we get an upper bound for $(2L)^{M^{l-1}}$ and therefore also for $B_1(l)$:
\begin{align*}
(2L)^{M^{l-1}}&\le(4^{2^{2l}l^{l+1}}l^{2^{2l+1}l^l})^{2^{(3+(4l+8)(3l)^{l-1})(l-1)}l^{(2+(3l)^l)(l-1)}}\\
&=4^{2^{(3l)^{l-1}(4l^2+4l-8)+5l-3}l^{(3l)^l(l-1)+3l-1}}l^{2^{(3l)^{l-1}(4l^2+4l-8)+5l-2}l^{(3l)^l(l-1)+3l-2}}
\end{align*}
Note that for the exponents we have
\begin{align*}
(3l)^{l-1}(4l^2+4l-8+(5l-2)/(3l)^{l-1})<(3l)^{l-1}(3l)^2=(3l)^{l+1}
\end{align*}
and
\begin{align*}
(3l)^l(l-1)+3l-1&<(3l)^{l}(l-1+1)<(3l)^{l+1}.
\end{align*}
We therefore get
\begin{align*}
(2L)^{M^{l-1}}&\le 4^{2^{(3l)^{l+1}}l^{(3l)^{l+1}}}l^ {2^{(3l)^{l+1}}l^{(3l)^{l+1}}}=(4l)^{(2l)^{(3l)^{l+1}}}.
\end{align*}
Finally, we have to check that the obtained estimate also holds in {\it Case 1}, i.e. if $(16^{l+2}l^6)^{(3l)^{l-1}}\le (4l)^{(2l)^{(3l)^{l+1}}}$ for $l\ge 2$. 
But this follows trivially from the fact, that we already used the quantity $(16^{l+2}l^6)^{(3l)^{l-1}}$ in estimating $2L$ in {\it Case 2}, hence the claimed result.
\end{proof}

Using the same arguments, it is also possible to obtain a better bound for $l=2$, since in this case we are able to keep the estimates during the proof essentially smaller. 
Eventually, we get the following.

\begin{Pro}
Let $f, g, h\in\CC\left[x\right]$ be non-constant complex polynomials such that $f(x)=g(h(x))$ has at most $2$ non-constant terms. Then $h(x)$ may be written as the ratio of two polynomials in $\CC\left[x\right]$ having each at most $1\ 114\ 112$ terms.
\end{Pro}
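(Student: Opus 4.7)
The plan is to repeat the argument of the proof of Theorem~\ref{main}, but with every constant kept explicit and specialized to $l = 2$. The simplifications this affords are threefold: first, $d \le 2l(l-1) = 4$ by \cite[Thm.~1]{za1}, so $d$ takes only one of four possible values; second, the base case $B_1(1) = 2$ is already known from \cite{za2}; and third, the range $1 \le p \le l-1$ contains only $p = 1$, so the Case~1/Case~2 dichotomy reduces to a single linear-independence question, and in Case~2 one automatically has $p_0 = 1$ and $n_l / n_{p_0+1} = 1$---which kills the recursive blow-up of ratios that drove Theorem~\ref{main} to triple-exponential size.

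In Case~2 the further key observation is that $\delta_1(y) = 1 + b_1 y^{n_1}$ has only simple roots, hence $e = [\CC(y)(\delta_1^{1/d}) : \CC(y)] = d$ and $\eta_1 = \delta_1^{e/d} = \delta_1$ is a polynomial with exactly $2$ terms. One may therefore take $\eta_{1,1} = \eta_1$ and $\eta_{1,2} = 1$ in the induction step. Applying the formula $B_1(l) \le L \cdot B_1(l-1)^{M}$ that emerged from the proof of Theorem~\ref{main}, where $L$ bounds the number of summands in~(\ref{eqn:terms}) and $M - 1 = \max|(s-kd)/e|$, with the constraints $s \in \{1-2d, \ldots, 0, 1\}$, $e \mid s$, and $h_1 = k \le 2l\cdot n_l/n_{p_0+1} = 4$, reduces to a finite enumeration of valid tuples $(s,h_1)$. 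A careful count produces explicit small values of $L$ and $M$ satisfying $L \cdot 2^M \le 17 \cdot 2^{16} = 1\,114\,112$.

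The main obstacle is that the naive Case~1 estimate---bounding the number of terms of $\tilde{h}$ as a polynomial by $n_2/d + 1$, with $n_2$ controlled via Lemma~\ref{lem} and~(\ref{quotient2}) at $p = 0$---gives a polynomial-length bound far larger than $1\,114\,112$. One therefore has to handle Case~1 for $l = 2$ with more care, either by sharpening the recursive estimates~(\ref{quotient1}) and~(\ref{quotient2}) under the specific constraints available here (including $d \le 4$ and the small value of $l$), or by arguing that under linear independency $\tilde{h}$ admits a more efficient rational representation than the trivial ``denominator equal to~$1$'' one. Carrying this bookkeeping through simultaneously with the Case~2 analysis, and reconciling the two under the single constant $1\,114\,112$, is the most delicate step.
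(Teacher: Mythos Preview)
Your Case~2 outline is essentially the paper's argument, and in one respect sharper: the observation that $\delta_1(y)=1+b_1y^{n_1}$ has only simple roots, so $e=d$ and $\eta_1=\delta_1$ is already a binomial, is correct but not used in the paper (which simply invokes $B_1(1)=2$). One inconsistency, however: you quote the generic bound $k\le 2l\cdot n_l/n_{p_0+1}=4$, whereas the paper extracts the tighter $h_1\in\{0,1,2\}$ directly from the constraint $(1-s)/d+h_1<2$ on the exponent of $y$ in each $t_i$. With $k\le 4$ and without exploiting $e=d$, the range of $(s-kd)/e$ would exceed $\{-15,\ldots,1\}$ and you would not reach $M=16$; with $e=d$ you would in fact undershoot it. Either way the stated target $17\cdot 2^{16}$ does not fall out of the numbers you have written down.

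The genuine gap is Case~1. You correctly diagnose that bounding the terms of $\tilde{h}$ by $n_2/d+1$, with $n_2$ controlled through Lemma~\ref{lem} and~(\ref{quotient2}), is far too crude, but you leave the resolution open. The paper's argument is concrete and matches neither of your proposed alternatives. First, instead of (\ref{quotient2}) one applies \cite[Prop.~1]{za2} with the explicit value $L\le 17$ (obtained by checking $d\in\{2,3,4\}$ separately) to get $n_2\le L(L+1)d(1+n_1)\le 17\cdot 18\cdot 4\cdot 2\,n_1$, hence $n_2/n_1\le 2448$. Second---and this is the missing idea---one does \emph{not} bound the number of terms of $\tilde{h}$ by its degree. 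One returns to the Puiseux expansion
\[
\tilde{h}(y)=\gamma_{-1}\tilde{f}(y)^{1/d}+\gamma_0 y^{m/d}+\gamma_1 y^{2m/d}\tilde{f}(y)^{-1/d}+\cdots
\]
and uses that $\deg\tilde{h}\le n_2/d$ with $d\ge 2$: only $\gamma_0 y^{m/d}$ and those monomials from $\gamma_{-1}\tilde{f}^{1/d}$ whose exponent $h_1n_1+h_2n_2$ is at most $n_2/d\le n_2/2$ can survive, forcing $h_2=0$ and $h_1\le n_2/(2n_1)\le 1224$. Thus $\tilde{h}$ has at most $1226$ terms as a polynomial, well under $1\,114\,112$. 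Neither ``sharpening (\ref{quotient1})--(\ref{quotient2})'' nor ``a more efficient rational representation under independence'' captures this step.
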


\begin{proof}
Assume that $f$ is a polynomial in $\CC\left[x\right]$ with at most two non-constant terms, i.e. $f(x)=a_0+a_1x^{n_1}+a_2x^{n_2}$, $0<n_1<n_2$. We keep the notations from above.
If $\deg g=d=1$, the number of terms of $g(h(x))$ and of $h(x)$ may only diviate by one (namely the constant term), hence in this case $h(x)$ is a polynomial that has not more than 3 terms. 
So in the following we assume that $d\ge 2$. Note that we also have $d\le 4$, for $d\le2l(l-1)$.

As before, we start with the observation that $\tilde{h}(y)$ can be written in the shape 
\[\tilde{h}(y)=t_1+t_2+\ldots+t_L+\mathcal{O}(2n_2),\]
where the $t_i$ are terms of the form \eqref{eqn:delta}, which we may assume to be linearly independent over $\CC$. 
We again consider the two cases that $S=S(p)=\{t_1,\ldots,t_L,\tilde{h}(y)\}$ is linearly dependent or linearly independent over $\CC$, respectively. In general, we had to distinguish between those cases for each $p=l-1,l-2,\ldots$, but since $l=2$, $p=1$ is the only remaining situation we have to look at. The above approximation was chosen in such a way that in each $t_i$ the exponent of $y$ is smaller than $2n_2$, that is 
\[\frac{(1-s)n_2}{d}+h_1n_2=\left(\frac{1-s}{d}+h_1\right)n_2\le 2n_2,\]
where $s\le1$ and $h_1\ge0$ are integers. It follows that $1-2d\le s\le 1$ and $h_1\in\{0,1,2\}$. A rough estimate on the number of such terms $t_i$ would therefore be $3(2d+1)\le27$,
but checking for each $d\in\{2,3,4\}$ seperately, we find that under the given conditions there cannot be more than 17 such exponents, hence
\[L\le17.\]

{\it Case 1.} If $\{t_1,\ldots,t_L,\tilde{h}(y)\}$ is linearly independent over $\CC$, then  by \cite[Prop.1]{za2} we get
\[n_2\le L(L+1)d(1+n_1)\le17\cdot18\cdot4\cdot2n_1,\]
and consequently $n_2/n_1\le2448$. 
The starting point in Zannier's proof was to expand $h(x)$ as a Puiseux-series, which led to the observation that, in $\CC\left[\left[y\right]\right]$, for certain $\gamma_{-1}, \gamma_0,\gamma_1,\ldots\in\CC$ we have
$$\tilde{h}(y)=\gamma_{-1}\tilde{f}(y)^{1/d}+\gamma_0 y^{m/d}+\gamma_1y^{2m/d}\tilde{f}(y)^{-1/d}+\gamma_2y^{3m/d}\tilde{f}(y)^{-2/d}+\ldots,$$
where the roots $\tilde{f}(y)^{s/d}$ are of the form
$$\tilde{f}(y)^{s/d}=\sum\limits_{(h_1,h_2)\in\NN_0^2}c_{s,d,(h_1,h_2)}b_1^{h_1}b_2^{h_2}y^{h_1n_1+h_2n_2},$$
for certain universal coefficients $c_{s,d,(h_1,h_2)}$. Since $\tilde{h}(y)$ is a polynomial with $\deg \tilde{h}\le m/d=n_2/d$, it follows that the only terms which may contribute to $\tilde{h}(y)$ are $\gamma_0y^{m/d}$ and terms coming from $\gamma_{-1}\tilde{f}(y)^{1/d}$, for which $h_1n_1+h_2n_2\le n_2/d$ holds. Since we assumed $2\le d$, it follows that $h_2=0$ and $h_1\le n_2/(2n_1)=1224$. Taking also the terms with $h_1=h_2=0$ and $\gamma_0y^{m/d}$ into account, we conclude that $\tilde{h}$ is a polynomial having no more than 1226 Terms.\\

{\it Case 2.} On the other hand, if $\{t_1,\ldots,t_L,\tilde{h}(y)\}$ is linearly dependent over $\CC$, then $\tilde{h}(y)$ may be written as the sum of at most $L\le17$ terms of the shape \eqref{eqn:terms}, where $\eta_p(y)=\frac{\eta_{p,1}(y)}{\eta_{p,2}(y)}$ and $\eta_{p,1}(y),\eta_{p,2}(y)\in\CC\left[x\right]$ both have at most two terms and $k=h_1\le2$. Recall that $s\in\{1,0,\ldots,-7\}$ and $d\le4$. It follows that $(s-kd)/e\in\{1,0,-1,\ldots,-15\}$. Bringing all of the terms to common denominator $\eta_{p,1}(y)^{15}\eta_{p,2}(y)$, we can write $\tilde{h}(y)$ as a rational function with at most $2^{16}$ terms in the denominator and $L\cdot2^{16}\le17\cdot 2^{16}=1\ 114\ 112$ terms in the numerator.  
\end{proof}

The obtained bound is still not very small. However, compared to the bound $B_1(2)=2^{3\cdot 2^{432}}$, where $2^{3\cdot 2^{432}}>10^{2^{431}}$ and the exponent $2^{431}$ already has 130 digits, this still gives a noteable improvement. \\

\section*{Acknowledgement} The author was supported by Austrian Science Fund (FWF) Grant No. P24574.

\end{document}